\documentclass[11pt]{amsart}
\usepackage {amsmath, amssymb, a4wide, epsfig, enumerate, psfrag}
\usepackage[latin1]{inputenc}
\usepackage[english]{babel}

\date{\today}

\keywords{}
 \author{Romain Dujardin}

\address{CMLS \\ \'Ecole Polytechnique \\ 91128 Palaiseau\\
         France}
\email{dujardin@math.polytechnique.fr}
\thanks{Not intended for publication}

\title{A note on the rank of positive closed currents}




\newcommand{\cc}{\mathbb{C}}

\newcommand{\pp}{\mathbb{P}}
\newcommand{\e}{\varepsilon}
\newcommand{\cv}{\rightarrow}

\newcommand{\om}{\Omega}

\newcommand{\rest}[1]{ \arrowvert_{#1}}

\newcommand{\unsur}[1]{\frac{1}{#1}}

\newcommand{\bra}[1]{\left\langle #1\right\rangle}

\DeclareMathOperator{\rank}{rank}

\newtheorem{prop} {Proposition} 
\newtheorem{thm}[prop] {Theorem} 

\newtheorem{lem}[prop] {Lemma}

\theoremstyle{remark}

\newtheorem{rmk}[prop]{Remark}

\begin{document}
 
\maketitle

The purpose of this note is to prove the following theorem, which was stated without   proof in \cite{fatou}.

\begin{thm}\label{thm:proj}
Let $T$ be a strongly positive closed current of bidimension $(p,p)$ on $\mathbb{P}^k$, and assume that its trace measure has dimension 
$\dim(\sigma_T)< 4p$.
Then $\sigma_T$-a.e. we have that
\begin{equation}\label{eq:dim}
p\leq \mathrm{rank} (T) \leq \unsur{2}\dim (\sigma_T).
\end{equation}
\end{thm}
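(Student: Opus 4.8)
\medskip\noindent The plan is to prove the two bounds separately, working locally in a coordinate chart $U\cong B\subseteq\cc^k$ with $\sigma_T=T\wedge\o^p$ for the standard Kähler form $\o$ (comparable to the trace with respect to $\o_{FS}$). Recall that at $\sigma_T$-a.e.\ $x$ the current $T$ is represented by a unit strongly positive $(p,p)$-vector $\vec T(x)$, and that $\rank(T)(x)$ is the dimension of the smallest subspace $V_x\subseteq T_xU$ with $\vec T(x)\in\Lambda_{p,p}V_x$ --- equivalently, the dimension of the linear span of the $p$-planes carrying the probability measure on $\mathrm{Grass}(p,T_xU)$ representing $\vec T(x)$ (here strong positivity is what makes this notion behave well). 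The lower bound is then immediate: a nonzero strongly positive $(p,p)$-vector cannot sit inside a space of dimension $<p$, and $\vec T(x)\neq0$ for $\sigma_T$-a.e.\ $x$. For the upper bound I would establish the pointwise estimate
\[
2\,\rank(T)(x)\ \le\ \underline{d}(\sigma_T,x):=\liminf_{\rho\to0}\frac{\log\sigma_T(B(x,\rho))}{\log\rho}\qquad\text{for }\sigma_T\text{-a.e.\ }x,
\]
which gives \eqref{eq:dim}, since $\underline{d}(\sigma_T,\cdot)\le\dim(\sigma_T)$ $\sigma_T$-a.e.

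\medskip\noindent
First I would fix a point $x$ that is a Lebesgue point of $\vec T$ for $\sigma_T$ (so $\vec T$ is $\sigma_T$-approximately continuous at $x$), set $\tau:=\vec T(x)$, $r:=\rank(T)(x)$, and blow up: rescaling $T$ by the homotheties centred at $x$ and normalising the trace mass on the unit ball to $1$, a standard compactness argument for positive closed currents produces, along a suitable sequence $\rho_j\to0$, a \emph{tangent current} $S$ --- a positive closed $(p,p)$-current on $\cc^k$ with $\sigma_S(B(0,1))\le1$, $0\in\supp(\sigma_S)$, and, crucially (this is where approximate continuity is used), $\vec S\equiv\tau$ identically. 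Moreover $\sigma_S$ is a tangent measure of $\sigma_T$ at $x$, with $\sigma_S(B(0,\rho))=\lim_j\sigma_T(B(x,\rho\rho_j))/\sigma_T(B(x,\rho_j))$ whenever $\sigma_S(\partial B(0,\rho))=0$.

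\medskip\noindent
The heart of the matter is then the structure of positive closed currents with constant direction. Put $W:=V_x$, $\dim W=r$, split $\cc^k=W\oplus W^{\perp}$ with coordinates $(z,w)$, so $\tau\in\Lambda_{p,p}W$ has full rank in $W$. Since $\vec S$ has no $W^{\perp}$-component, $S\wedge\pi_{W^{\perp}}^{*}\o_{W^{\perp}}=0$, whence $S=c\,\pi_{W^{\perp}}^{*}\bigl(\o_{W^{\perp}}^{\,k-r}\bigr)\wedge\widetilde S$, with $\widetilde S$ a $(r-p,r-p)$-form-current "in the $z$-directions" (coefficients possibly depending on $w$); closedness $dS=0$ reduces to $d_z\widetilde S=0$, while $\vec S\equiv\tau$ forces the direction of $\widetilde S$ to be the fixed full-rank $(p,p)$-vector $\widetilde\tau$ of $W$. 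Writing $\widetilde S=\widetilde\tau\cdot\mu$ with $\mu=\sigma_{\widetilde S}$, the identity $d_z\widetilde S=0$ reads $\int\langle\widetilde\tau,d_z\eta\rangle\,d\mu=0$ for all test forms $\eta$; because $\widetilde\tau$ has full rank in $W$, wedging with $\widetilde\tau^{\flat}$ is injective on $1$-forms and the functions $\langle\widetilde\tau,d_z\eta\rangle$ span enough of the $z$-divergences to force $\mu$ to be invariant under $z$-translations, i.e.\ $\mu=\mathrm{Leb}_W\otimes\mu_0$ for a positive measure $\mu_0$ on $W^{\perp}$. Hence $\sigma_S=c'\,\mathrm{Leb}_W\otimes\mu_0$, so $\sigma_S(B(0,\rho))\le C\rho^{2r}$ for $\rho\le\tfrac12$, with $C=C(k,r)$ (using $\sigma_S(B(0,1))\le1$) --- and the same bound, with the same $C$, holds for \emph{every} tangent measure of $\sigma_T$ at $x$.

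\medskip\noindent
Finally I would transfer this back to all scales: the uniform bound above gives $\limsup_{\delta\to0}\sigma_T(B(x,\rho\delta))/\sigma_T(B(x,\delta))\le C\rho^{2r}$ for each fixed $\rho<\tfrac12$, and iterating this along the scales $\rho^{n}\delta_0$ (taking care of intermediate radii), then letting $n\to\infty$ and $\rho\to0$, yields $\underline{d}(\sigma_T,x)\ge2r$. The hypothesis $\dim(\sigma_T)<4p$ is used in the technical steps (keeping the construction in the regime $r<2p$ and controlling escaping mass in the tangent extraction), and it is also what makes \eqref{eq:dim} a genuine constraint. The main obstacle is the structure theorem for currents of constant full-rank direction --- the passage from "$d_z\widetilde S=0$ with constant full-rank direction" to "$\sigma_{\widetilde S}$ is a multiple of Lebesgue", which blends the linear algebra of strongly positive forms with the distributional divergence identity --- together with the routine but delicate extraction of a nondegenerate tangent current and the bookkeeping of escaping mass.
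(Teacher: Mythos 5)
Your proposal takes a genuinely different route from the paper. The paper works globally: it projects $T$ by a generic linear projection $\pi$ onto $\pp^\ell$ with $\ell=\lfloor\frac12\dim(\sigma_T)\rfloor+1$, shows by disintegration that $t_{\pi_*T}$ is a fibre-average of $\pi_*(t_T)$, and then applies Theorem~\ref{thm:rank}: if $T$ had rank $\geq\ell$ on a set $A$ of positive mass, then $\dim(\sigma_{\pi_*(T\rest{A})})<2\ell$ forces $(\pi_*(T\rest{A}))^2_{\rm ac}=0$, hence rank $<\ell$ a.e., contradicting the generic transversality of Lemma~\ref{lem:linalg}. You instead aim at the pointwise estimate $2\rank(T)(x)\le\underline{d}(\sigma_T,x)$ via tangent currents and a structure theorem for closed currents with constant strongly positive direction. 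That structure theorem is correct, and the injectivity you invoke (wedging $1$-forms against the dual form of $\widetilde\tau$) does hold: for $\xi$ a $(1,0)$-covector not vanishing on $W$, pair $\tau$ with $i\xi\wedge\bar\xi\wedge\beta^{p-1}$ and use that $\tau\llcorner\beta^{p-1}$ is a positive Hermitian form whose range is $\mathrm{Span}(\tau)$ --- note that strong positivity is indispensable here, exactly as it is in the paper's Lemma~\ref{lem:linalg}. The tangent-measure bookkeeping is the delicate part, but it is salvageable because you only need the limit current on the unit ball normalized at the extraction scale itself, so non-doubling behaviour of $\sigma_T$ does not derail the density iteration. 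What each approach buys: yours gives a local, pointwise refinement; the paper's is shorter because it outsources the analysis to \cite{fatou}. One claim of yours is, however, wrong and worth flagging: the hypothesis $\dim(\sigma_T)<4p$ plays no role in your argument --- it is not needed to keep $r<2p$ nor to control escaping mass --- so you would in fact be proving a stronger statement. In the paper that hypothesis is essential, since it guarantees $\ell\le 2p$, i.e.\ $2(\ell-p)\le\ell$, without which the self-intersection $(\pi_*S)^2$ required by Theorem~\ref{thm:rank} cannot even be formed on $\pp^\ell$. You should either confirm that your route genuinely dispenses with the hypothesis (I believe it does, once the structure theorem and the scale-transfer are written out in full) or identify honestly where it enters; as written, your appeal to it is a placeholder rather than a step of the proof.
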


We refer to \cite{lelong, demailly} or \cite[\S 1,2]{fatou} for detailed basics on positive exterior algebra and positive currents. Here we just recall that a positive current $T$ of bidimension $(p,p)$ admits an \textit{integral representation} in the sense that there exists a measurable field $t_T$ of positive $(p,p)$ vectors such that 
 for any test $(p,p)$-form $\varphi$
$$\langle T,\varphi \rangle = \int \langle t_T, \varphi \rangle \sigma_T, $$ where $\sigma_T$ is the trace measure. If $t_T(x)$ is well-defined (which happens $\sigma_T$-a.e.), the rank of $T$ at $x$ is the rank of $t_T$, that is the dimension of the smallest sub-vector space $W$ of $T_x\mathbb{P}^k$ such that  $t_T(x)\in\bigwedge^{p,p}(W)$. A positive 
$(p,p)$ vector is decomposable if and only if its rank equals $p$. 

\medskip

Before starting the proof, we also recall the following  result  
  \cite[Corollary 2.5]{fatou}.

\begin{thm} \label{thm:rank}
Let $T$  
be a strongly positive current of bidegree $(q,q)$ in $\om\subset \cc^k$. Assume that  the family  $(T_\e^2)_{\e>0}$ has locally uniformly bounded mass as $\e\cv 0$.

Then if $T^2_{\rm ac}=0$,  $T$ has rank $<k$ a.e.
\end{thm}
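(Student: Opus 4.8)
The plan is to reduce the statement to a pointwise linear-algebra criterion for \emph{maximal} rank, and then to show that maximal rank is incompatible with the hypotheses: on the Lebesgue-regular part of $\sigma_T$ it would charge $T^2_{\mathrm{ac}}$, while on the Lebesgue-singular part it would make the mass of $T_\e^2$ blow up. Throughout I write $p=k-q$ (and assume $2q\le k$, the only case in which $T_\e^2$ is of admissible bidegree and the hypotheses carry information), regularize $T$ by convolution $T_\e=T*\rho_\e$ into smooth strongly positive $(q,q)$-forms, and use the integral representation $T=t_T\,\sigma_T$ with $t_T(x)$ a field of unit-mass strongly positive $(p,p)$-vectors.

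First I would isolate the linear algebra. The key elementary dichotomy is that a unit strongly positive $(p,p)$-vector $t$ has $\mathrm{rank}(t)<k$ if and only if there is a nonzero $(1,0)$-covector $\ell$ with $\langle t,\,i\ell\wedge\bar\ell\wedge\o^{p-1}\rangle=0$, i.e. $t$ is carried by the hyperplane $\ker\ell$. This suggests introducing
$$\tilde Q(t)=\min_{\|\ell\|=1}\big\langle t,\;i\ell\wedge\bar\ell\wedge\o^{p-1}\big\rangle\ \ge 0,$$
which vanishes \emph{exactly} when $\mathrm{rank}(t)<k$, and which is \emph{concave} on the compact convex set of normalized strongly positive $(p,p)$-vectors, being an infimum of linear functionals of $t$. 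For $q=1$ one checks directly that $\tilde Q(t)$ is comparable to $e_2$ of the associated Hermitian matrix, that is, to the genuine pointwise self-intersection density.

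The analytic core is a lower bound for the self-intersection mass in terms of $\tilde Q$. Convolution realizes $t_{T_\e}(x)=\int t_T(x')\,d\nu_{x,\e}(x')$ as a barycentre of unit vectors against the probability measure $d\nu_{x,\e}(x')=\rho_\e(x-x')\,d\sigma_T(x')/\sigma_\e(x)$, with $\sigma_\e=\sigma_T*\rho_\e$. Granting that the density of the self-intersection mass dominates $c\,\tilde Q(t_{T_\e})\,\sigma_\e^2$ (discussed below), concavity and Jensen give, after exchanging the order of integration,
$$\liminf_{\e\to 0}\ \mathbf M_{\Omega'}(T_\e^2)\ \gtrsim\ \int_{\Omega'}\tilde Q\big(t_T(x')\big)\,g_\e(x')\,d\sigma_T(x'),\qquad g_\e=\sigma_T*\rho_\e*\check\rho_\e,\ \ \Omega'\Subset\om .$$
Then I conclude by splitting $\sigma_T=\sigma_{\mathrm{ac}}+\sigma_{\mathrm{sing}}$ with respect to Lebesgue measure. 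On the singular locus $g_\e(x')\to+\infty$ for $\sigma_{\mathrm{sing}}$-a.e. $x'$ (the singular part has infinite Lebesgue density at $\sigma_{\mathrm{sing}}$-a.e. of its points), so if $\tilde Q(t_T)>0$ on a set of positive $\sigma_{\mathrm{sing}}$-measure the right-hand side diverges, contradicting the uniform mass bound; hence $\mathrm{rank}<k$ holds $\sigma_{\mathrm{sing}}$-a.e. On the regular locus the same pointwise bound, localized through the lower semicontinuity of mass under weak convergence, yields $T^2_{\mathrm{ac}}\ge c\,\tilde Q(t_T)\,\sigma_{\mathrm{ac}}$, so the hypothesis $T^2_{\mathrm{ac}}=0$ forces $\mathrm{rank}<k$ $\sigma_{\mathrm{ac}}$-a.e. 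Together these give $\mathrm{rank}(T)<k$ $\sigma_T$-a.e.

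The hard part is the comparison, for $q\ge 2$, between the genuine self-intersection density and the concave rank-detector $\tilde Q$: one must show that the pointwise mass density of $T_\e^2$ really dominates a positive multiple of $\tilde Q(t_{T_\e})\,\sigma_\e^2$. For $q=1$ this is the elementary concavity of $e_2$ on trace-normalized positive Hermitian matrices. For general $q$ the self-intersection quadratic form is indefinite on trace-free $(q,q)$-forms, so the comparison cannot come from naive concavity; it must instead rest on the positivity (Hodge--Riemann / mixed Lefschetz type) of self-intersection restricted to the cone of strongly positive forms. Establishing this comparison, together with the converse direction of the linear-algebra dichotomy (the implication that a vanishing pairing forces $t$ to be carried by a hyperplane, which uses the structure of strongly positive forms), is where the real work lies.
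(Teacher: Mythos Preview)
This note does not contain a proof of Theorem~\ref{thm:rank}: the result is quoted as \cite[Corollary~2.5]{fatou}, and the paper even defers to \cite{fatou} for the definition of $T^2_{\mathrm{ac}}$. There is therefore no in-paper argument against which to compare your proposal.

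On the proposal itself: the architecture --- a concave rank-detector $\tilde Q$, Jensen under convolution, and the ac/singular dichotomy for $\sigma_T$ --- is correct and yields a complete proof when $q=1$. But the gap you flag for $q\ge 2$ is real and not repairable in this form. In $\cc^6$ with $q=2$, set
\[
\alpha \;=\; (idz_4\wedge d\bar z_4)\wedge(idz_5\wedge d\bar z_5)\;+\;(idz_4\wedge d\bar z_4)\wedge(idz_6\wedge d\bar z_6)\;+\;(idz_5\wedge d\bar z_5)\wedge(idz_6\wedge d\bar z_6).
\]
This is strongly positive, and $\alpha\wedge\alpha=0$ since any two summands share a common $(1,1)$ factor. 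On the other hand the associated tangent $(4,4)$-vector $t$ is, up to normalization, the sum of the decomposable vectors of the three coordinate $4$-planes $\{z_j=z_k=0\}$ with $\{j,k\}\subset\{4,5,6\}$, and one computes that $t\llcorner\omega^3$ is a diagonal $(1,1)$-vector with all entries positive; hence $\rank(t)=6$ and $\tilde Q(t)>0$. The pointwise inequality you need thus fails outright, and no Hodge--Riemann type positivity restricted to the strongly positive cone can rescue it. Worse, the constant current $T=\alpha$ then has $T_\e\wedge T_\e\equiv 0$ with $\rank(T)=k$ everywhere, so if $T^2_{\mathrm{ac}}$ meant merely the absolutely continuous part of a weak limit of $T_\e\wedge T_\e$, the statement itself would fail on this example. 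Any correct argument must therefore go through the specific construction of $T^2_{\mathrm{ac}}$ given in \cite{fatou} --- which this note explicitly declines to reproduce --- rather than through a comparison with the naive wedge square.
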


We refer to \cite{fatou} for the precise definition of $T^2_{\rm ac}$ for a positive current $T$.

\begin{proof}[Proof of Theorem \ref{thm:proj}]
By definition $\rank(T)\geq p$ so only the  inequality  $\mathrm{rank} (T) \leq \unsur{2}\dim (\sigma_T)$ needs to be established. Let $\ell = \lfloor \unsur{2}\dim(\sigma_T)\rfloor +1$. Let $q=k-p$.

If $I$ (resp. $L$) is a linear subspace of dimension $k-\ell-1$ (resp. $\ell$), such that $I\cap L=\emptyset$,  we can consider the linear projection of center $I$, $\pi_I: \pp^k\setminus I\cv L$. If $I$ is fixed, changing $L$ amounts to post-composing $\pi_I$ with a linear automorphism, so we may
simply think of $\pi_I$ as  mapping $\pp^k\setminus I$ onto $\pp^\ell$. 

For generic $I$ the projection $(\pi_I)_*T$ is a well-defined  positive current of bidimension $(p,p)$ in $L\simeq \pp^\ell$, in the sense that it satisfies the property $\langle(\pi_I)_*T, \varphi\rangle=
\bra{T, \pi_I^*\varphi}$  for every test $(p,p)$ form, and it has the same mass as $T$.
Indeed for this it is enough to resove $\pi_I$ by writing it as $\beta\circ\alpha^{-1}$, where $\alpha$ and $\beta$ are holomorphic, and define $(\pi_I)_* = \beta_*\alpha^*$. The operator $\alpha^*$ is always well-defined  on  compact K\"ahler manifolds,
even if it is not always continuous (see \cite{ds-pullback} for details).
Shortly we'll see that $(\pi_I)_*T$ is strongly positive.
Notice that $\ell-p\leq \ell/2$, as follows from our assumption on $\dim (\sigma_T)$.

\medskip

So from now on we consider $I$ such that $(\pi_I)_*T$ is  well-defined and $\sigma_T(I)=0$, and we simply write $\pi$ for $\pi_I$. We also denote by $\omega_L$ the restriction of $\omega$ to $L$.
Fix  a Borel set $E$ such that $\sigma_T(E) = 1$ and $t_T(y)$ exists at every $y\in E$.

The first claim is that $\sigma_{\pi_*T}\ll\pi_*\sigma_T$. Indeed observe first that
$\pi^*(\omega_L^p)\ll \omega^p$, thus $T\wedge\pi^*(\omega_L^p)\ll T\wedge \omega^p$. Next, we have the formulas
$\sigma_{\pi_*T} = \pi_*\left( T\llcorner\pi^*\omega_L^p \right)$ and
$\pi_*\sigma_T = \pi_*\left( T\llcorner\omega^p\right) $ and the result easily follows.

From this we deduce that $\dim((\pi_I)_*T)\leq\dim(\sigma_T)<2\ell$.
Indeed, since $\pi$ is locally Lipschitz outside $I$, $\mathrm{HD}(\pi_I(E))\leq \mathrm{HD}(E)$, and
$\pi_I(E)$ is a set of full mass for $\sigma_{(\pi_I)_*T}$.

Conversely for generic $I$, $\pi_*\sigma_T\ll\sigma_{\pi_*T}$.
Indeed, if not,  there is a set $A$ of positive $\sigma_T$ mass  such that if $x\in A$
$$t_T(x)\llcorner\pi^*(\omega_L^p) = \langle t_T(x),\pi^*(\omega_L^p)\rangle
=\langle \pi_*(t_T(x)),\omega_L^p\rangle =0,$$ thus  $\pi_*(t_T(x))=0$. This means that the decomposable vectors making up $t_T(x)$ are not in general position with respect to the fibers of $\pi_L$. More precisely, if $t$ is such a vector, $\mathrm{Span}(t)$ will not be transverse to the fiber, which has dimension $k-\ell<k-p$. This can only  happen for a set of projections of zero measure (see  Lemma \ref{lem:linalg} below). We conclude that the existence of such a $A$ is not possible for generic $I$.
From now on we assume that $I$ is chosen so that $\pi_*\sigma_T\ll\sigma_{\pi_*T}$, and we
let $h\in L^1_{\rm loc}(\sigma_{\pi_*T})$ such that $\pi_*\sigma_T = h \sigma_{\pi_*T}$.

We can now describe the tangent vectors to $\pi_*T$.
Recall that the measure $\sigma_T$ can be disintegrated along the fibers of the projection $\pi$ as follows.  If $f$ is a measurable function we have that
$$\int f(x) \sigma_T(x) = \int_L \left(\int_{\pi^{-1}(z)} f(x)  \sigma_T(x|\pi^{-1}(z))\right) (\pi_*\sigma_T)(z),$$ with the  usual notation $\sigma_T(\cdot|\pi^{-1}(z))$ for the conditional measure of $\sigma_T$ on the fiber.

If now $\varphi$ is a test $(p,p)$ form on $L$, we have
\begin{align*}
\bra{\pi_*T,\varphi}&= \bra{T,\pi^*\varphi} =
 \int \bra{t_T(x),(\pi^*\varphi)(x)} \sigma_T(x)\\
 &=\int_L\left(\int_{\pi^{-1}(z)}\bra{t_T(x),(\pi^*\varphi)(x)} \sigma_T(x|\pi^{-1}(z))\right) (\pi_*\sigma_T)(z)
 \\   &=\int_L\left(\int_{\pi^{-1}(z)}\bra{\pi_*(t_T(x)),\varphi(\pi(x))} \sigma_T(x|\pi^{-1}(z))\right) (\pi_*\sigma_T)(z)\\
 &=\int_L \bra{\widetilde{t}(z),\varphi(z)}(\pi_*\sigma_T)(z) \text{ where }  \widetilde{t}(z)=\int_{\pi^{-1}(z)}\pi_*(t_T(x))\sigma_T(x|\pi^{-1}(z))\\
 &=\int_L  \bra{h(z) \widetilde{t}(z),\varphi(z)} \sigma_{\pi_*T}(z).
\end{align*}
We see that the last integral is actually the  integral representation of $\pi_*T$, so  for $\sigma_{\pi_*T}$ a.e. $z$,
\begin{equation}\label{eq:average}
t_{\pi_*T}(z) =h(z) \widetilde{t}(z) =
h(z)\int_{\pi^{-1}(z)}\pi_*(t_T(x))\sigma_T(x|\pi^{-1}(z)).
\end{equation}
This implies  in particular that $\pi_*T$ is strongly positive, since $t_{\pi_*T}$ is a.s. an average of strongly positive $(p,p)$ vectors.

\medskip

We are now in position to conclude the proof of the theorem. We argue by contradiction, so let us assume that there exists  a set $A$ of positive trace mass  such that $t_T(x)$ has rank $\geq \ell$ for  $x\in A$. Let $S=T\rest{A}$, and consider the current $\pi_*S$
on $L$.
Then $\pi_*S$ satisfies the assumptions of Theorem \ref{thm:rank}, since it
 is  dominated by the positive closed current $\pi_*T$. Since $\dim(\sigma_{\pi_*S})<2\ell$,  we infer that $(\pi_*S)_{\rm ac}=0$, therefore $\rank(\pi_*S)<\ell$ a.e.
 
Now  by \eqref{eq:average}, for a.e. $z$, $t_{\pi_*S}(z)$ is an average of
 $\pi_*(t_S(x))$ with $x\in \pi^{-1}(z)$. Thus by Lemma \ref{lem:linalg} {\em i.} below, 
$\rank(\pi_*(t_S(x)))<\ell$ for $\sigma_S(\cdot|\pi^{-1}(z))$-a.e. $x$. On the other hand,  by  Lemma \ref{lem:linalg} {\em ii.}, if $I$ is chosen generically, $\rank(\pi_*(t_S(x)))\geq \ell$, $\sigma_S$-a.e.  This contradiction finishes the proof.
\end{proof}

\begin{lem} Let $V$ be a Hermitian complex vector space  with associated (1,1) form $\beta$.
\label{lem:linalg}
\begin{itemize}
\item[\it i.] Let $(t_\alpha)_{\alpha\in \mathcal{A}}$ be a measurable family of strongly
positive $(p,p)$ vectors of trace 1, and $\mu$ be a probability measure on $\mathcal{A}$.
Let $t= \int_\mathcal{A} t_\alpha d\mu(\alpha)$. If $\rank(t)<\dim V$, then
for a.e. $\alpha$, $\rank(t_\alpha)< \dim V$.
\item[\it ii.] Let $p<\ell$ and fix  a complex subspace $L$  of dimension $\ell$. If
$K$ is a supplementary subspace to $L$, we denote by
 $\pi_{K, L}$ be the projection onto $L$ with kernel $K$.
Let $t$ be a strongly positive $(p,p)$ vector of rank $r\geq \ell$.
Then there exists a set $\mathcal{E}(t)$ of zero Lebesgue measure in the corresponding Grassmannian
  such that, if $K\notin \mathcal{E}(t)$,  $\rank((\pi_{K,L})_*(t))= \ell$.
\end{itemize}
\end{lem}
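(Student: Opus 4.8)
The plan is to deduce both statements from one fact about the smallest subspace $W(u)\subseteq V$ with $u\in\bigwedge^{p,p}(W(u))$ attached to a strongly positive $(p,p)$-vector $u$. To each $(1,0)$-covector $\xi\in V^*$ associate $Q_\xi(u):=\bra{u,\,i\xi\wedge\bar\xi\wedge\beta^{p-1}}$, which is $\geq 0$ by strong positivity. Writing $u=\int u_j\,d\rho(j)$ as an average of decomposable strongly positive $(p,p)$-vectors of trace $1$, carried by $p$-planes $W_j$ (the Choquet representation of $u$ on the compact convex set of strongly positive vectors of trace $\leq 1$, whose nonzero extreme points are exactly the decomposable ones of trace $1$), and using that $\beta^{p-1}$ restricts to a strictly positive form on each $W_j$, one obtains $Q_\xi(u_j)=0\iff\xi|_{W_j}=0$. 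By linearity and nonnegativity, $Q_\xi(u)=0\iff\xi$ vanishes on $W_j$ for $\rho$-a.e.\ $j\iff\xi$ vanishes on $W(u)=\mathrm{Span}\set{W_j:j\in S}$ for any set $S$ of full $\rho$-measure. In particular $\rank(u)<\dim V$ precisely when $Q_\xi(u)=0$ for some $\xi\neq 0$; more generally, a subspace $U\subseteq V$ carries an average $\int v_j\,d\rho$ of strongly positive vectors iff it carries $v_j$ for $\rho$-a.e.\ $j$, so $W\!\left(\int v_j\,d\rho\right)=\mathrm{Span}\set{W(v_j):j\in S}$ for $S$ of full measure.

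Granting this, part \emph{i.}\ is immediate: if $\rank(t)<\dim V$, choose $\xi\neq 0$ with $0=Q_\xi(t)=\int_{\mathcal A}Q_\xi(t_\alpha)\,d\mu(\alpha)$; as each $Q_\xi(t_\alpha)\geq 0$, this forces $Q_\xi(t_\alpha)=0$, hence $\rank(t_\alpha)<\dim V$, for $\mu$-a.e.\ $\alpha$.

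For part \emph{ii.}\ I would fix a Choquet decomposition $t=\int t_\alpha\,d\nu(\alpha)$ with $t_\alpha$ decomposable and carried by $p$-planes $W_\alpha$, so $W(t)=\mathrm{Span}\set{W_\alpha}$ has dimension $r\geq\ell$, and let $\hat\nu$ be the image of $\nu$ under $\alpha\mapsto W_\alpha$ in the Grassmannian of $p$-planes of $V$. The claim is that, for $K$ outside a Lebesgue-null set $\mathcal E(t)$, one has both $(a)$ $\nu\set{\alpha:\,W_\alpha\cap K\neq 0}=0$ and $(b)$ $\pi_{K,L}(W(t))=L$. For $(a)$: since $p<\ell$, for each fixed $p$-plane $W$ the complements $K$ of $L$ with $W\cap K\neq 0$ form a proper algebraic, hence Lebesgue-null, subset of the relevant Grassmannian, so Fubini gives $\int\hat\nu\set{W:\,W\cap K\neq 0}\,d\leb(K)=\int\leb\set{K:\,W\cap K\neq 0}\,d\hat\nu(W)=0$, whence $\hat\nu\set{W:\,W\cap K\neq 0}=0$ for a.e.\ $K$. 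For $(b)$: a generic complement $K$ of $L$ meets $W(t)$ in dimension $\max(0,r-\ell)=r-\ell$, so $\dim\pi_{K,L}(W(t))=r-(r-\ell)=\ell$. Now fix such a $K$. For $\nu$-a.e.\ $\alpha$ one has $W_\alpha\cap K=0$, hence $\pi_{K,L}$ maps $W_\alpha$ isomorphically onto the $p$-plane $\pi_{K,L}(W_\alpha)$ and $(\pi_{K,L})_*t_\alpha$ is a \emph{nonzero} decomposable strongly positive vector carried by $\pi_{K,L}(W_\alpha)$. Applying the last assertion of the first paragraph to $(\pi_{K,L})_*t=\int(\pi_{K,L})_*t_\alpha\,d\nu(\alpha)$ then gives $W\!\left((\pi_{K,L})_*t\right)=\mathrm{Span}\set{\pi_{K,L}(W_\alpha):\,\alpha\in S}=\pi_{K,L}(W(t))=L$, so $\rank\left((\pi_{K,L})_*t\right)=\ell$.

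The step I expect to be the main obstacle is the first paragraph: one must justify the Choquet decomposition into decomposable vectors, and then verify that ``carrying subspace of an average $=$ span of the carrying subspaces of the pieces'' is compatible with discarding $\rho$-null sets and with linear pushforwards. The nonnegativity of the $Q_\xi$ is precisely what forbids cancellation and makes this span description valid; the remainder of part \emph{ii.}\ is dimension counting plus the routine Fubini argument for $(a)$, for which one also checks that $\mathcal E(t)$ is measurable.
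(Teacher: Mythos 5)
Your argument is correct, and its skeleton is the same as the paper's: decompose $t$ into decomposable strongly positive pieces, exploit the absence of cancellation among strongly positive vectors, and use genericity of linear projections. The differences are in execution. For \emph{i.}, the paper contracts with $\beta^{p-1}$ to reduce to nonnegative Hermitian matrices and quotes concavity of $M\mapsto(\det M)^{1/k}$; your $Q_\xi(u)=\langle u,i\xi\wedge\bar\xi\wedge\beta^{p-1}\rangle$ is exactly the associated quadratic form, so you are testing against a null covector instead of a determinant --- same reduction, and your version has the advantage of making explicit the ``carrier of an average equals essential span of the carriers'' principle, which the paper leaves implicit when it asserts $\rank(\sum(\pi_{K,L})_*(t_k))\geq\ell$ in part \emph{ii.} For \emph{ii.}, the paper uses the standard fact (Carath\'eodory applied to the compact base of the strongly positive cone) that the decomposition can be taken \emph{finite}, $t=\sum_{k=1}^s t_k$, so genericity of $K$ is just a finite intersection of open full-measure conditions and no Fubini is needed; your Choquet-integral decomposition plus Fubini over the Grassmannian is heavier machinery for the same conclusion, though it does work. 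Two small points: your assertion that $W(u)=\mathrm{Span}\{W_j: j\in S\}$ for \emph{any} full-measure $S$ is false as stated (a null set of $\alpha$'s with exceptional $W_\alpha$ can enlarge the span); only ``$\supseteq W(u)$ for every full-measure $S$, with equality for \emph{some} full-measure $S$'' is true, but this is the direction your part \emph{ii.}\ actually uses, so no harm is done. Also, for step $(b)$ one should check that at least one complement $K$ of $L$ satisfies $K+W(t)=V$, so that the exceptional set is a \emph{proper} algebraic subset; this is routine but worth saying.
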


\begin{proof}[Proof of Lemma \ref{lem:linalg}]

{\it i.} Recall that $\rank(t) = \rank(t\llcorner \beta^{p-1})$ so it is enough to prove the result for positive (1,1) vectors, that is, nonnegative Hermitian matrices. But in this context the result is obvious, as follows for instance from the concavity of
$M\mapsto (\mathrm{det}(M))^{1/k}$.

\medskip

{\it ii.}
We  use the following fact: if $p\leq \ell$ and $W$ is a $p$-dimensional subspace, then the set of $K$'s such that $\pi_{K,L}\rest{W}:W\cv L$ is injective is open and of full   measure.

Fix a decomposition $t=\sum_{k=1}^s t_k$ of $t$ as a sum of decomposable vectors.
Since $p<\ell$, by the previous observation we can assume that  for each $k$,
$\pi_{K,L}\rest{\mathrm{Span}(t_k)}$ is injective 
Furthermore, let us choose $\ell$ linearly independent vectors
$e_1, \ldots, e_\ell$ belonging to $\bigcup_{k=1}^s\mathrm{Span}(t_k)$. We may assume that
$\pi_{K,L}\rest{\mathrm{Vect}(e_1, \ldots, e_\ell)}$ is injective as well. Thus, $(\pi_{K,L})_*(t_k)$ is a non-trivial decomposable element of $\bigwedge^{p,p}(L)$, and by our second requirement
$\rank(\sum(\pi_{K,L})_*(t_k))\geq \ell$, whence the result.
\end{proof}

\begin{rmk}
It is clear from the proof that  a sharper condition for  $\mathrm{rank}(T)<\ell$ a.e. is that
for  a generic linear
 projection $\pi$ onto $\mathbb{P}^\ell$, $\pi_*\sigma_T$ is singular w.r.t. Lebesgue measure.
\end{rmk}

\end{document}